\theoremstyle{plain}
\newtheorem{theorem}{Theorem}
\newtheorem{lemma}{Lemma}
\newtheorem{prop}{Proposition}
\theoremstyle{definition}
\newtheorem{defi}{Definition}
\newtheorem{example}{\bf Example}
\theoremstyle{remark}
\let\phi\varphi
\DeclareMathOperator{\G}{\mathbb{G}}
\DeclareMathOperator{\F}{\EuScript{F}}
\DeclareMathOperator{\sym}{\mathrm{sym}}
\DeclareMathOperator{\nGrp}{\mathit{n}-\EuScript{G}\mathit{rp}}
\DeclareMathOperator{\inv}{\mathrm{inv}}
\DeclareMathOperator{\Aut}{\mathrm{Aut}}
\DeclareMathOperator{\im}{\mathrm{im}}
\DeclareMathOperator{\Z}{\mathbb{Z}}
\DeclareMathOperator{\Cc}{\mathbb{C}}
\DeclareMathOperator{\N}{\mathbb{N}}
\title{$n$-valued Coset Groups and Dynamics}
\author{Mikhail Kornev}
\date{}
\begin{document}

\maketitle

\begin{abstract}
We obtain asymptotic and exact formulae of growth functions for some families of $n$-valued coset groups. We also describe connections between the theory of $n$-valued groups and Symbolic Dynamics.
\end{abstract}

\tableofcontents

\section{Introduction}

In 1971, S. P. Novikov and V. M. Buchstaber gave the construction, which was predicted by the theory of characteristic classes. This construction describes a multiplication, with a product of any pair of elements being a non-ordered multiset of $n$ points. It led to the notion of $n$-valued groups, which was given axiomatically and developed by V. M. Buchstaber. Many authors are studying the theory of $n$-valued groups with applications in various areas of Mathematics and Mathematical Physics. Since 1996, V. M. Buchstaber and A. P. Veselov have been studying the applications of the theory of $n$-valued groups to discrete dynamical systems \cite{Buchstaber_Veselov1}. In 2010, V. Dragović showed that the associativity equation for some 2-valued group explains the Kovalevskaya top integrability mechanism \cite{Dragovich1}. Recently, M. Chirkov obtained results about the growth function of some $n$-valued group on complex numbers $\Cc$ \cite{Chirkov}. He also established connections between the compositions of integer numbers and Venn diagrams. See references for further relevant results.  

In Section \ref{preliminaries}, we briefly introduce the main constructions from the theory of $n$-valued groups and fix some notations following \cite{Buchstaber}. 

In Section \ref{growth}, we get exact and asymptotic formulae for the growth functions of the $n$-vaued coset groups $$\G_\phi(\langle a, b\ | \ a^m = b^m = 1 \rangle)\text{ and }\G_\phi\left ( \langle a_1, ..., a_s \ | \ a_1^2=...=a_s^2=1 \rangle \right ),$$ where $\phi$ is the automorphism permuting cyclically the generators of group presentations in each case. Surprisingly, this problem is related to $n$-bonacci numbers, and we use the result of \cite{Du} to succeed.  

In Section \ref{symbolic}, we consider more closely the case $\G_\phi(\langle a, b\ | \ a^3 = b^3 = 1 \rangle)$, constructing a fruitful tree which enables us to establish an interesting combinatorial observation in Proposition \ref{observation}, and connections with Symbolic Dynamics.

\section*{Acknowledgments}

The author is grateful to Dr. Victor Buchstaber for the inspiring atmosphere and fruitful discussions during the preparation of this paper.

\section{Preliminaries}\label{preliminaries}

Let us give some definitions and examples following paper \cite{Buchstaber}.

\begin{defi}
An \emph{$n$-valued multiplication} on a set $X$ is a map
\[
\nu: X\times X \to \sym^n(X)\,:\,\nu(x,y)=x*y=[z_1,z_2,\dots, z_n], \; z_k=(x*y)_k,
\]
where $\sym^n(X):=X^n/\Sigma_n$ is the $n$-symmetric power of $X$, or, equivalently, the set consisting of all $n$-multisets $[x_1, ..., x_n]$ (or, sets for short), such that the following conditions are satisfied:

\begin{itemize}

\item {\it Associativity.} The $n^2$-sets
\begin{gather*}[x*(y*z)_1,x*(y*z)_2,\dots,x*(y*z)_n],\\
[(x*y)_1*z,(x*y)_2*z,\dots,(x*y)_n*z]
\end{gather*}
are equal for all $x,y,z\in X$.

\item \emph{Unit.} There is an element $e\in X$, for which $e*x=x*e=[x,x,\dots,x]$\; for all\; $x\in X$.

\item \emph{Inverse map.} There is a map $\mathrm{inv}\colon X\to X$ with the property

\[
e\in \mathrm{inv}(x)* x\text{ and }\; e \in x*\mathrm{inv}(x)\text{ for all }x\in X.
\]
\end{itemize}
If a set $X$ has an $n$-valued multiplication, then $X$ is called an {\it $n$-valued group}. Sometimes, we denote $n$-valued groups via blackboard bold, like $\G$.
\end{defi}

\begin{example}
Any 1-valued group is an ordinary group. 
\end{example}

\begin{example}\label{nonnegative}

Consider the semigroup of nonnegative integers $\mathbb{Z}_+$. Define the multiplication $\nu\colon \mathbb{Z}_+\times \mathbb{Z}_+\to \sym^2\mathbb{Z}_+$
by the formula $x*y=[x+y,|x-y|]$. The unit $e=0$. The inverse map $\mathrm{inv}(x)=x$. For the associativity, one has to verify that the two $4$-sets
\[[x+y+z, |x-y-z|,x+|y-z|, |x-|y-z||]\]
and
\[[x+y+z,|x+y-z|,|x-y|+z,||x-y|-z|]\]
are equal for all nonnegative integers $x,y,z$ (an exercise for the reader).
\end{example}

As in the case of ordinary groups, one can define homomorphisms of $n$-valued groups. Hence, the class of all $n$-valued groups forms a category $\nGrp$.

\begin{defi}
A map $f: X\to Y$ of $n$-valued groups is called \emph{a homomorphism} if 

\begin{itemize}

\item $f(e_X)=e_Y$.

\item $f(\mathrm{inv}_X(x))=\mathrm{inv}_Y(f(x))$ for all $x\in X$.

\item $\nu_{Y}(f(x),f(y))=(f)^n\nu_X(x,y)$ for all $x, y\in X$.

\end{itemize}

In particular, the following diagram commutes:

\[
\xymatrix{
    X\times X \ar[r]^{\nu_X} \ar[d]_{f\times f} & \sym^n(X) \ar[d] \\
    Y\times Y \ar[r]       & \sym^n(Y) }
\]

\end{defi}

There is a way to construct many $n$-valued groups \cite{Buchstaber}. Given an ordinary ($1$-valued) group $G$ with the mutliplication $\nu_0$, the unit $e_G$, and the inverse element $\inv_G(u) = u^{-1}$, and given a finite subgroup $H$ of automorphism group $\Aut(G)$. Consider the quotient space $X := G/\phi(H)$ of $G$ by the action of the group $\im(\phi)$ with the quotient map $\pi: G\to X$ and $n$-valued multiplication $$\nu: X\times X\to \sym^n(X)$$ defined by the formula
\begin{equation}\label{coset}
\nu(x, y) = [ \pi(\nu_0(u, h(v))) \ | \ h\in H ]
\end{equation}
where $u\in\pi^{-1}(x)$ and $v\in\pi^{-1}(y)$.

\begin{theorem}[V. Buchstaber, \cite{Buchstaber}]\label{coset_theorem}
The multiplication $\nu$ from {\normalfont (\ref{coset})} defines an $n$-valued group structure on the orbit space $X = G/\phi(H)$ with the unit $e_x = \pi(e_G)$ and the inverse map $\inv_X(x) = \pi(\inv_G(u))$, where $u\in\pi^{-1}(x)$.
\end{theorem}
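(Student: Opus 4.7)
The plan is to verify the four required conditions in turn: well-definedness of the formula $\nu(x,y)=[\pi(\nu_0(u,h(v)))\mid h\in H]$ with respect to the choice of representatives, associativity, the unit property for $e_X=\pi(e_G)$, and the inverse property for $\inv_X(x)=\pi(\inv_G(u))$. Throughout, the main tool is that each $h\in H$ is a group automorphism of $G$, so that it commutes with $\nu_0$ and with $\inv_G$, together with the tautology $\pi\circ h=\pi$ for every $h\in H$.

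First, for well-definedness, I would replace $u$ by $h_1(u)$ and $v$ by $h_2(v)$ for arbitrary $h_1,h_2\in H$ and show the multiset is unchanged. Using that $h_1$ is a homomorphism,
\[
\nu_0(h_1(u),h(h_2(v)))=h_1\bigl(\nu_0(u, h_1^{-1}hh_2(v))\bigr),
\]
and applying $\pi$ kills the outer $h_1$. As $h$ ranges over $H$, the element $h_1^{-1}hh_2$ ranges over $H$ as well, so the resulting multiset coincides with $[\pi(\nu_0(u,h'(v)))\mid h'\in H]$.

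Next, for associativity, I would pick representatives $u,v,w$ of $x,y,z$ and expand both $n^2$-multisets explicitly. Using the representative $\nu_0(v,h_2(w))$ for $(y*z)_{h_2}$ and then the definition of $\nu$, the first multiset becomes
\[
\bigl[\pi\bigl(u\cdot h_1(v\cdot h_2(w))\bigr)\bigr]_{h_1,h_2\in H}=\bigl[\pi\bigl(u\cdot h_1(v)\cdot h_1h_2(w)\bigr)\bigr]_{h_1,h_2\in H},
\]
again by the homomorphism property of $h_1$. The second multiset expands similarly to $[\pi(u\cdot h_1'(v)\cdot h_2'(w))]_{h_1',h_2'\in H}$. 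The change of variables $(h_1',h_2')=(h_1,h_1h_2)$ is a bijection $H\times H\to H\times H$, which matches the two multisets.

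For the unit and inverse, both computations are short and essentially forced by the identity $\pi\circ h=\pi$. One finds $e_X*x=[\pi(h(u))]_{h\in H}=[\pi(u),\dots,\pi(u)]=[x,\dots,x]$ and likewise $x*e_X=[x,\dots,x]$; while for the inverse, the $h=\mathrm{id}$ term of $\inv_X(x)*x=[\pi(u^{-1}\cdot h(u))]_{h\in H}$ equals $\pi(e_G)=e_X$, showing $e_X\in\inv_X(x)*x$, and symmetrically on the other side. I expect the main obstacle to be bookkeeping rather than conceptual: one must be careful that the map $(h_1,h_2)\mapsto(h_1,h_1h_2)$ genuinely gives a bijection of multisets (not just of sets) in the associativity step, and that the rewriting $\nu_0(h_1(u),h(h_2(v)))=h_1(\nu_0(u,h_1^{-1}hh_2(v)))$ is applied consistently each time an $H$-orbit representative is shifted.
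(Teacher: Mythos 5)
The paper does not actually prove this theorem: it is quoted from Buchstaber's paper \cite{Buchstaber} and used as a black box, so there is no in-paper argument to compare against. Your verification is nevertheless correct and complete --- the well-definedness computation $\nu_0(h_1(u),h(h_2(v)))=h_1\bigl(\nu_0(u,h_1^{-1}hh_2(v))\bigr)$ followed by $\pi\circ h_1=\pi$, the reindexing $(h_1,h_2)\mapsto(h_1,h_1h_2)$ of $H\times H$ for associativity, and the $h=\id$ term for the inverse axiom are exactly the standard argument in the cited source; the only point you leave implicit is the (one-line) well-definedness of $\inv_X$ itself, which follows from $h(u)^{-1}=h(u^{-1})$ and $\pi\circ h=\pi$.
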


\begin{defi}
The $n$-valued group $\G_\phi(G)$ from Theorem \ref{coset_theorem} is called the {\it coset group}.
\end{defi}

\begin{example}\label{Z_2}
Consider $G = \Z/2\ast\Z/2 \cong \langle a, b\ | \ a^2 = b^2 = e \rangle$ and the finite cyclic group of automorphisms generated by the element $\phi: a\mapsto b$ of order $2$. It is easy to see that the underlying set of $X = G/\langle \phi \rangle$ consists of elements $$u_{2n}=[(ab)^n, (ba)^n],$$ $$u_{2n+1}=[b(ab)^n, a(ba)^n]$$ for every $n\geqslant 0$, and the 2-valued mutliplication in the coset group $\G_\phi(G)$ being $$u_k\ast u_\ell = [u_{k+\ell}, u_{|k-\ell|}].$$ Particularly, the 2-valued groups $\G_\phi(G)$ and $\Z_{+}$ from Example \ref{nonnegative} are isomorphic. It means that $\Z_{+}$ is a $2$-valued coset group (but not every $n$-valued group is a coset group).  
\end{example}

It is time to discuss the "bridge" between the theory of $n$-valued groups and dynamical systems. 

\begin{defi}
An {\it $n$-valued dynamics} $T$ on a space (not necessarily an $n$-valued group) $X$ is a map $T: X\to \sym^n(X)$.
\end{defi} 

Thinking about $X$ as a state space, an $n$-valued dynamics $T$ defines possible states $T(x) = [x_1, ..., x_n]$ at the moment $t+1$ as a state function of $x$ at the moment $t$.

\begin{example}
Consider a polynomial with respect to the variable $y$: $$F(x, y) = b_0(x)y^n + b_1(x)y^{n-1}+...+b_n(x), \ x,y\in \Cc,$$ with $b_i: \Cc\to \Cc$ being arbitrary functions. The equation $F(x, y) = 0$ defines an $n$-valued dynamics $$T: \Cc\to \sym^n(\Cc), \ x\mapsto [y_1, ..., y_n],$$ where $[y_1, ..., y_n]$ is the $n$-set of roots of $F(x, y) = 0$.
\end{example}

For an $n$-valued dynamics $T: X\to \sym^n(X)$, there is a fruitful characteristic measuring the growth of $T$.

\begin{defi}\label{dynamics}
For any $x\in X$ and an $n$-valued dynamics $T: X\to \sym^n(X)$, the {\it growth function} $\xi_{x}:\N\to \N$ sends $k$ to the number of different points in $$\bigcup\limits_{i = 0}^k T^{i}(x) = \{\ast\}\cup T(x)\cup T^2(x)\cup ...\cup T^k(x),$$ where $\ast$ denotes a marked point in the underlying set of $X$. 
\end{defi} 

In this context, there is a general problem of characterizing such $n$-valued dynamics that functions $\xi_x$ have a polynomial growth (see Figure \ref{poly_exp}).

\begin{figure}
\centering
\includegraphics[width=0.6\linewidth]{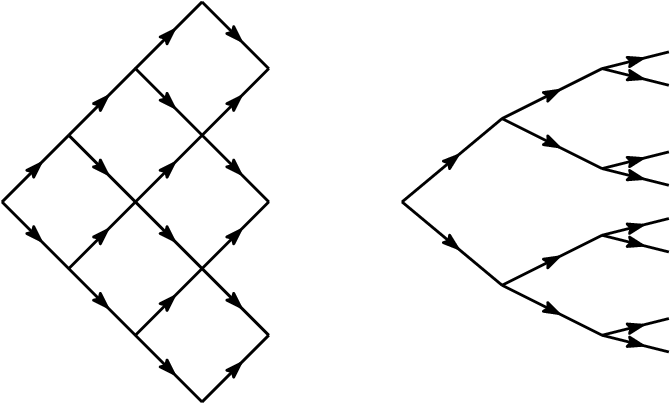}
\caption{Polynomial and exponential growth}
\label{poly_exp}
\end{figure} 

$n$-valued dynamical systems come, for example, from an action of $n$-valued groups on a space:

\begin{defi}\label{group_action}
An {\it action of $n$-valued group $\G$ on a space} (not necessarily an $n$-valued group) $X$ is defined to be the map $$f: \G\times X\to \sym^n(X),\ f(g, x) = g\cdot x = [x_1, ..., x_n],$$ such that

\begin{itemize}

 \item for any elements $g_1$, $g_2\in \G$ and any $x\in X$ the following $n^2$-multisets coincide: $$g_1(g_2x) = [g_1x_1, ..., g_1x_n]$$ and $$(g_1g_2)x = [h_1x, ..., h_nx],$$ where $g_2x = [x_1, ..., x_n]$ and $g_1g_2 = [h_1, ..., h_n]$,
 
 \item $ex = [x, ..., x]$ for any $x\in X$. 
 
 \end{itemize}
 
\end{defi}  

\begin{example}[The left action of $\G$ on itself]
In the setting of Definition \ref{group_action}, take $X$ and $f$ to be the group $\G$ and the operation on $\G$, respectively.
\end{example}

The next class of $n$-valued groups will be in the focus of this paper.

\begin{defi}\label{cyclic}
An $n$-valued group $\G$ is called {\it cyclic} if there is an element $g\in \G$, such that any element $h$ of the underlying set of $\G$ belongs to a multiset $g^k$ for some $k>0$.
\end{defi}

\section{Growth of $n$-valued groups}\label{growth}

In this section, we consider the class of such finitely presented groups $G = \langle f_1, ..., f_n\ | \ R \rangle$ that admit an automorphism $\phi\in\Aut(G)$ cyclically permuting $n$ generators $f_1, ..., f_n$. Let $\mathbb{G}_\varphi(G)$ denote the $n$-valued group arising from the group $G$ and the automorphism $\phi\in\Aut(G)$. Note that $\G_\phi(G)$ is cyclic in the sense of Definition \ref{cyclic}. We study the growth function $\xi_g(k)$ associated with the element $g = [f_1, ..., f_n]$. In this setting, we assume the marked point from Definition \ref{dynamics} to be the empty word $\Lambda$, or equivalently, the identity element of $G$. For short, we will use the notation $\xi_k$ instead of $\xi_g(k)$.

\begin{example}
In Example \ref{Z_2}, we have the linear growth $\xi_k = k + 1$.
\end{example} 

\begin{prop}\label{z3z3}
For the group $\mathbb{Z}/3\ast\mathbb{Z}/3 = \langle a, b \ | \ a^3=b^3=1 \rangle$ and the automorphism $a\mapsto b$ the corresponding $2$-valued coset group has the growth function $$\xi_k = F_{k+3} = \cfrac{1}{\sqrt{5}}\left ( \left(\cfrac{1 + \sqrt{5 }}{2} \right)^{k+3} - \left( \cfrac{1-\sqrt{5}}{2} \right )^{k+3} \right ) - 1.$$ In particular, the growth is exponential: $\xi_k \sim \cfrac{\varphi^{k+3}}{\sqrt{5}}$ for $k\to\infty$, where $\varphi = \cfrac{1+\sqrt{5}}{2}$.
\end{prop}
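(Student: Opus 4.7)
My approach is to give a combinatorial description of the multisets $g^{k}$ for $g=[a,b]\in\G_\phi(G)$ and then count, via compositions and the Fibonacci recurrence, the distinct orbits contributed by $\bigcup_{i=0}^{k}g^{i}$. First, I will establish by induction on $k\geqslant 1$, using formula (\ref{coset}) with $H=\langle\phi\rangle=\{1,\phi\}$, the explicit description
\[
g^{k} \;=\; \bigl[\,\pi\bigl(a\,\epsilon_{2}\cdots\epsilon_{k}\bigr)\ :\ \epsilon_{2},\dots,\epsilon_{k}\in\{a,b\}\,\bigr],
\]
a multiset of $2^{k-1}$ orbits in $X=G/\langle\phi\rangle$. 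The inductive step must combine the two branches $\pi(a\cdot w)$ and $\pi(a\cdot\phi(w))$ produced by formula (\ref{coset}); using the $\phi$-invariance of $\pi$, one has $\pi(a\,\phi(w))=\pi(b\,w)$, so the two branches yield $(k{+}1)$-letter unreduced products beginning with $a$ and with $b$ respectively, and passing to $\phi$-orbits that start with $a$ recovers the claimed form.

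Next, I will define the \emph{letter-count} of a reduced word $a^{\alpha_{1}}b^{\beta_{1}}a^{\alpha_{2}}\cdots$ as the sum of its syllable exponents. Since the defining relations are only $a^{3}=b^{3}=1$, reducing an $i$-letter product can change the letter-count only by a multiple of $3$. Consequently, the orbit $\pi(w)$ of a reduced word $w$ of letter-count $\ell$ occurs in $g^{i}$ if and only if $\ell\leqslant i$ and $\ell\equiv i\pmod{3}$: the forward direction is immediate, and for the converse one picks the orbit-representative of $w$ starting with $a$, writes out its letters as $\epsilon_{2},\dots,\epsilon_{\ell}$, and prepends $(i-\ell)/3$ copies of the triple $aaa=1$. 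Taking $i=\ell$ in particular shows that $\bigcup_{i=0}^{k}g^{i}$ consists of exactly the orbits of reduced words with letter-count at most $k$.

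Finally, a $\phi$-orbit of a reduced word of letter-count $\ell\geqslant 1$ is determined by its sequence of syllable exponents $(\alpha_{1},\beta_{1},\alpha_{2},\dots)$, which is a composition of $\ell$ into parts from $\{1,2\}$ (the two possible initial letters are swapped by $\phi$ and so are identified). The number of such compositions is the Fibonacci number $F_{\ell+1}$, by the recurrence $c_{\ell}=c_{\ell-1}+c_{\ell-2}$ with $c_{0}=c_{1}=1$, so together with the single empty orbit $\{1\}$ and the standard identity $\sum_{m=1}^{n}F_{m}=F_{n+2}-1$ I obtain
\[
\xi_{k} \;=\; 1+\sum_{\ell=1}^{k}F_{\ell+1} \;=\; \sum_{m=1}^{k+1}F_{m} \;=\; F_{k+3}-1,
\]
and Binet's formula yields both the closed expression and the asymptotic $\xi_{k}\sim\varphi^{k+3}/\sqrt{5}$. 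I expect the main obstacle to be the inductive description of $g^{k}$, which requires careful bookkeeping of the $\phi$-action; once that is in place the rest is elementary combinatorics leveraging the classical bijection between $\{1,2\}$-compositions and Fibonacci numbers.
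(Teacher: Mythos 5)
Your proof is correct and follows essentially the same route as the paper: identify the elements accumulated by step $k$ with the normal-form (cubeless) words of length at most $k$ taken up to the automorphism, count those of each length $\ell$ as compositions of $\ell$ into parts from $\{1,2\}$ (giving $F_{\ell+1}$), and sum using $\sum_{m=1}^{n}F_m=F_{n+2}-1$. The only difference is that you justify more carefully --- via the explicit product description of $g^{k}$ and the mod-$3$ invariance of the letter-count --- why every such word has actually appeared by step $k$, a point the paper's proof passes over quickly.
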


\begin{proof}

We will count the number of words of length $k$ at the step at which $[a, b]^k$ appears. At step zero, we have an empty word corresponding to the unit of the 2-valued group. The first step produces the element $[a, b]$. At the second step, we have: $[a, b]^2 = [[a^2, b^2], [ab, ba]]$ --- two elements.

At step $k$, all words in the normal form of the group $\mathbb{Z}/3\ast\mathbb{Z}/3$ will appear. Any such word has the form (up to automorphism) $$a^{k_1}b^{k_2}a^{k_3}...,$$ where $k_i\in \{1, 2\}$. It means there will be as many such words of length $k$ as there are in how many ways the number $k$ can be divided into summands from the set $\{1, 2\}$, taking into account the order. Let us denote the last quantity by $S_k$.

From step $k$, you can go to step $k-1$, taking $1$ as the first summand among $k_i$'s --- in this case, the remaining summands can be chosen in $S_{k-1}$ ways, or you can take $2$ as the first summand --- in this case, the ones can be chosen in $S_{k-2}$ ways. It means we have the recurrence relation $$S_k = S_{k-1}+S_{k-2}, S_0 = 1, S_1 = 1.$$

Recall that the recurrent sequence $$F_n = F_{n-1}+F_{n-2}$$ with initial conditions $$F_0 = 0, F_1 = 1,$$ is called the {\it Fibonacci sequence}. There is the well-known Binet formula $$F_n = \frac{1}{\sqrt{5}}\left( \left(\frac{1+\sqrt{5}}{2}\right)^n - \left(\frac{1-\sqrt{5}}{2}\right)^n\right ).$$

In our case, $$S_k = F_{k+1}.$$

It is easy to prove by induction that $$F_1 + ... + F_k = F_{k+2} - 1.$$

Hence $$\xi_k = S_0 + S_1 + ... + S_k = F_{k+3} - 1 = \cfrac{1}{\sqrt{5}}\left ( \left(\cfrac{1 + \sqrt {5}}{2} \right)^{k+3} - \left( \cfrac{1-\sqrt{5}}{2} \right )^{k+3} \right ) - 1.$$

\end{proof}

\begin{prop}\label{triple_z_two}
For the group $\mathbb{Z}/2\ast\mathbb{Z}/2\ast\mathbb{Z}/2 = \langle a, b, c \ | \ a^2=b^2=c^2=1 \rangle$ with the cyclic automorphism $a\mapsto b\mapsto c$, the corresponding $3$-valued group has the growth function $\xi_k = 2^k$. In particular, the growth is exponential.
\end{prop}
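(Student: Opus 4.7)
The plan is to mirror the strategy of Proposition \ref{z3z3}. Elements of $G = \Z/2 \ast \Z/2 \ast \Z/2$ are in bijection with reduced words in the alphabet $\{a, b, c\}$ having no two consecutive letters equal, since the relations $a^2 = b^2 = c^2 = 1$ forbid repeats and no further cancellation is possible. The number of such reduced words of length $k$ is $3 \cdot 2^{k-1}$ for $k \geq 1$, as the first letter admits three choices and each subsequent letter two.

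Next I would study the induced action of the cyclic automorphism $\phi: a \mapsto b \mapsto c \mapsto a$ on reduced words. It acts letter-wise and preserves both length and reducedness. Since neither $\phi$ nor $\phi^2$ fixes any letter of $\{a,b,c\}$, neither can fix any nonempty reduced word. Hence the action of $\langle \phi \rangle \cong \Z/3$ is free on nonempty reduced words, so every such orbit has cardinality exactly $3$. The number of $\phi$-orbits among length-$k$ reduced words is therefore $S_k := 2^{k-1}$ for $k \geq 1$, with $S_0 = 1$ accounting for the empty word.

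Following the same reasoning as in Proposition \ref{z3z3}, the new elements in $\{\Lambda\} \cup g \cup g^2 \cup \ldots \cup g^k$ at step $j$ are precisely the orbits of length-$j$ reduced words: every reduced word of length $j$ appears as some (possibly unreduced) product of $j$ letters from $\{a,b,c\}$, each contributed by a copy of the representative of $g = [a, b, c]$. Summing,
$$\xi_k = S_0 + \sum_{j=1}^{k} S_j = 1 + \sum_{j=1}^{k} 2^{j-1} = 2^k.$$

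The only genuinely delicate point is the freeness of the $\langle\phi\rangle$-action on nonempty reduced words, which follows immediately from the fact that $\phi$ fixes no letter, combined with the uniqueness of the normal form in a free product. The remainder is a clean geometric-series computation, considerably simpler than the Fibonacci recurrence of the two-generator setting, because the relations $a^2=b^2=c^2=1$ yield the elementary count $3 \cdot 2^{k-1}$ rather than a two-term linear recurrence.
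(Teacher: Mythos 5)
Your proof is correct and follows essentially the same route as the paper: both reduce to showing that the number of new elements at step $k$ is $S_k = 2^{k-1}$ (you by counting all $3\cdot 2^{k-1}$ reduced words and quotienting by the free $\langle\phi\rangle$-action, the paper by directly counting representatives starting with $a$ via induction on appended letters) and then summing the geometric series. Your explicit freeness argument is a slightly more careful justification of what the paper phrases as ``the normal form up to automorphism,'' but the substance is identical.
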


\begin{proof}

The normal form, up to automorphism, for the group from the conditions has the form in which the first letter of the word is the letter $a$, and identical letters are not adjacent. Let us prove by induction that the number $S_k$ of distinct words in the normal form of length $k$ in the multiset $[a, b, c]^k$ is equal to $2^{k - 1}$ for $k>0$, and $S_0 = 1$. When $k=0$, we have the empty word. Suppose that at step $k-1$, we have $2^{k-2}$ different words of length $k-1$. Then, we add one letter to the right of each word of length $k-1$ so that the length of the normal form of the word increases by $1$. It can be done in two ways for one fixed word. There are exactly $2^{k-2}$ words of length $k-1$, so there will be $2\cdot 2^{k-2} = 2^{k - 1}$ words of length $k$. The statement about $\xi_k$ now follows easily.

\end{proof}

\begin{prop}
For the group $\left ( \mathbb{Z}/2\right )^{\ast s}= \langle a_1, ..., a_s \ | \ a_1^2=...=a_s^2=1 \rangle$ with the cyclic automorphism $\phi: a_i\mapsto a_{i+1}$ $($the indices are taken modulo $s)$ the corresponding s-valued coset group has the following growth function $$\xi_{k}=\begin{cases}
\dfrac{\left(s-1\right)^{k}-1}{s-2}+1, & s\geqslant3\\
k + 1, & s=2
\end{cases}$$ In particular, the growth is polynomial if and only if $s = 2$.
\end{prop}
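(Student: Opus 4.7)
The plan is to mirror the counting arguments of Propositions \ref{z3z3} and \ref{triple_z_two}. Fix $g = [a_1, \ldots, a_s]$ and let $S_k$ denote the number of distinct normal-form words of length $k$ appearing in the multiset $g^k$. Since the defining relations force $a_i^2 = 1$, the reduced words in $(\Z/2)^{\ast s}$ are exactly the strings in the alphabet $\{a_1, \ldots, a_s\}$ with no two consecutive letters equal. The automorphism $\phi$ acts on such strings by cycling the generators, so each $\phi$-orbit of reduced words contains a unique representative whose first letter is $a_1$; hence $S_k$ equals the number of such restricted strings of length $k$.

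Next, in the spirit of Proposition \ref{triple_z_two}, the claim $S_0 = 1$ and $S_k = (s-1)^{k-1}$ for $k \geq 1$ would be established by induction on $k$. The inductive step says that each normal-form word of length $k-1$ appearing in $g^{k-1}$ extends on the right by any of the $s-1$ letters distinct from its last letter, producing a normal-form word of length $k$ in $g^k$; distinct extensions yield distinct $\phi$-orbit representatives, and every word of length $k$ starting with $a_1$ arises this way from its length-$(k-1)$ prefix.

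Finally, summing: by Definition \ref{dynamics},
\[
\xi_k = S_0 + S_1 + \cdots + S_k = 1 + \sum_{j=1}^{k}(s-1)^{j-1}.
\]
For $s = 2$ every summand equals $1$, giving $\xi_k = k + 1$. For $s \geq 3$ the geometric-series formula yields $\sum_{j=1}^{k}(s-1)^{j-1} = \frac{(s-1)^k - 1}{s-2}$, which, after adding $1$, is precisely the stated expression. The polynomial-versus-exponential dichotomy in $s$ is then immediate from the growth rate of $(s-1)^k$.

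The step requiring the most care is the inductive assertion that the $s$-valued multiplication in $\G_\phi(G)$, defined via Theorem \ref{coset_theorem}, genuinely appends a single generator on the right of a representative word without creating cancellations that would reduce the length below $k$. This is where the $a_i^2 = 1$ relations and the free cyclic action of $\phi$ on the generators combine, in exactly the manner already exploited in the two preceding propositions.
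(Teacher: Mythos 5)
Your proof is correct and is essentially the argument the paper intends: the paper's proof is literally ``Similar to the proof of Proposition \ref{triple_z_two},'' and your generalization (normal forms with no repeated adjacent letters, a unique orbit representative starting with $a_1$, the count $S_k=(s-1)^{k-1}$ by right-extension, and the geometric-series summation) is exactly that argument carried out for general $s$. No gaps; if anything, you supply more detail than the paper does.
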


\begin{proof}
Similar to the proof of Proposition \ref{triple_z_two}.
\end{proof}

Before moving on to the next example, let us define a sequence of $n$-bonacci numbers.

\begin{defi}
A sequence of integers $\{F_k^{(n)}\}$ is called {\it $n$-bonacci} if it satisfies the recurrence relation 
\begin{equation}\label{n-bonacci}
F_k^{(n)} = F_{k-1}^{(n)} + ... + F_{k-n}^{(n)}
\end{equation} 
and initial conditions $F_0 = ... = F_{n-2} = 0, \ F_{n-1} = 1.$
\end{defi}

In what follows, instead of the cumbersome notation $F_n^{(n)}$, we will write $F_n$.

Using the standard recurrent sequence technique, we can obtain the following

\begin{prop}[see, for example, in \cite{Du}]\label{Binet}
The explicit formula for the $k$-th term of the $n$-bonacci sequence $($generalized Binet's formula$)$ is as follows
\begin{equation}\label{sum} F_k = \sum\limits_{i = 1}^n \frac{\lambda_i-1}{(n+1)\lambda_i-2n}\lambda_i^{k-n+ 1},\end{equation} where $\{\lambda_i\ | \ 1\leqslant i\leqslant n\}$ is the set of roots of the characteristic polynomial $\chi(\lambda) = \lambda^n - \lambda^{n-1} - ... - 1$.
\end{prop}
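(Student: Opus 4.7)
The plan is to follow the standard technique for linear recurrences with constant coefficients: build the generating function of $\{F_k\}$, decompose it into partial fractions, and then massage the residues into the shape appearing in~(\ref{sum}).

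First I would appeal to the classical fact that the $n$-bonacci polynomial $\chi(\lambda) = \lambda^n - \lambda^{n-1} - \cdots - 1$ has $n$ pairwise distinct roots (exactly one real root exceeds $1$ and all others lie strictly inside the unit disk, by a direct Rouché-style argument). Under this hypothesis every solution of (\ref{n-bonacci}) has the form $F_k = \sum_{i=1}^n c_i\,\lambda_i^k$ with uniquely determined coefficients, and it only remains to identify $c_i$ in closed form.

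Next, let $G(x) = \sum_{k\geq 0} F_k x^k$. Separating off the initial conditions $F_0 = \cdots = F_{n-2} = 0$, $F_{n-1} = 1$ from the recurrence summed over $k \geq n$, a short manipulation gives
$$G(x)\bigl(1 - x - x^2 - \cdots - x^n\bigr) \;=\; x^{n-1}.$$
The denominator on the left is the reciprocal polynomial $x^n\chi(1/x) = \prod_{i=1}^n(1-\lambda_i x)$, so partial fractions yield
$$G(x) \;=\; \sum_{i=1}^n \frac{A_i}{1 - \lambda_i x}, \qquad A_i \;=\; \frac{1}{\chi'(\lambda_i)},$$
where the identification of $A_i$ is obtained by multiplying through by $(1-\lambda_i x)$, setting $x=1/\lambda_i$, and clearing the resulting factor of $\lambda_i^{n-1}$. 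Expanding each summand as a geometric series then produces $F_k = \sum_i \lambda_i^k/\chi'(\lambda_i)$.

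To finish, I would rewrite $\chi'(\lambda_i)$ in closed form using the telescoping identity
$$(\lambda - 1)\,\chi(\lambda) \;=\; \lambda^{n+1} - 2\lambda^n + 1,$$
verified by direct expansion. Differentiating and setting $\lambda = \lambda_i$ (so that $\chi(\lambda_i)=0$) gives
$$(\lambda_i - 1)\,\chi'(\lambda_i) \;=\; (n+1)\lambda_i^n - 2n\lambda_i^{n-1} \;=\; \lambda_i^{n-1}\bigl((n+1)\lambda_i - 2n\bigr),$$
so $1/\chi'(\lambda_i) = (\lambda_i-1)\,\lambda_i^{1-n}/\bigl((n+1)\lambda_i - 2n\bigr)$, and substituting this into $F_k = \sum_i \lambda_i^k/\chi'(\lambda_i)$ reproduces formula~(\ref{sum}). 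The main obstacle is precisely this last simplification: without the telescoping trick $(\lambda-1)\chi(\lambda) = \lambda^{n+1}-2\lambda^n+1$, the Vandermonde-type residues $1/\chi'(\lambda_i)$ remain an unwieldy polynomial in $\lambda_i$; multiplying by $(\lambda-1)$ is exactly what collapses the geometric sum and produces the compact closed form stated in the proposition.
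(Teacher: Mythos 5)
Your proof is correct, and the endgame --- rewriting $1/\chi'(\lambda_i)$ via the telescoping identity $(\lambda-1)\chi(\lambda)=\lambda^{n+1}-2\lambda^n+1$ --- is exactly the step the paper uses as well. Where you genuinely diverge is in how you arrive at the intermediate formula $F_k=\sum_i\lambda_i^k/\chi'(\lambda_i)$: you form the generating function, observe that $G(x)\bigl(1-x-\cdots-x^n\bigr)=x^{n-1}$ with denominator $\prod_i(1-\lambda_i x)$, and read off the partial-fraction residues $A_i=1/\chi'(\lambda_i)$ by evaluation at $x=1/\lambda_i$; the paper instead writes $F_k=\sum_i C_i\lambda_i^k$ and solves the $n\times n$ system imposed by the initial conditions via Cramer's rule, identifying $C_1=W_1/W=1/\chi'(\lambda_1)$ through a ratio of Vandermonde determinants. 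The two computations are equivalent (both ultimately express the coefficient as the reciprocal of $\prod_{j\neq i}(\lambda_i-\lambda_j)$), but your residue calculus is shorter and avoids the determinant bookkeeping, while the paper's route makes the existence and uniqueness of the $C_i$ visibly hinge on the nonvanishing of the Vandermonde determinant, i.e.\ on the simplicity of the roots. One small caveat: the simplicity of the roots of $\chi$ is not a consequence of the Rouch\'e-type localization you cite (that separates the large root from the others but says nothing about the inner roots colliding); it needs a separate one-line check, e.g.\ that the only candidate multiple root $\lambda=2n/(n+1)$ coming from $P'(\lambda)=0$ is not a root of $P(\lambda)=\lambda^{n+1}-2\lambda^n+1$. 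The paper likewise asserts simplicity without proof, so this does not set your argument apart, but it is worth noting.
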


\begin{proof}
From the theory of linear recurrences with constant coefficients, it is known that the general solution of equation (\ref{n-bonacci}) has the form $$F_k = C_1\lambda_1^k + ... +C_n\lambda_n^k,$$ where $\lambda_i$ are the same as in the conditions. It remains to find the constants $C_i$ from the linear system of equations $$\begin{cases}
C_{1}+...+C_{n} & =0,\\
C_{1}\lambda_{1}+...+C_{n}\lambda_{n} & =0,\\
...\\
C_{1}\lambda_{1}^{k-2}+...+C_{n}\lambda_{n}^{k-2} & =0,\\
C_{1}\lambda_{1}^{k-1}+...+C_{n}\lambda_{n}^{k-1} & =1
\end{cases}$$

It can be easily done using Cramer's rule and the properties of Vandermonde's determinant. The system has a unique solution since the characteristic polynomial $\chi(\lambda)$ has no multiple roots. Due to the symmetry of the system for cyclic permutations, we restrict ourselves to finding $C_1$. We have $$C_1 = \frac{W_1}{W} =(-1)^{n+1} \frac{\prod\limits_{1<i<j\leqslant n}(\lambda_j - \lambda_i)}{ \prod\limits_{1\leqslant i < j \leqslant n}(\lambda_j - \lambda_i)} =\frac{(-1)^{n+1}}{\prod\limits_{1< j\leqslant n } (\lambda_j - \lambda_1)} = \frac{1}{\chi' (\lambda_1)},$$ where $W$ is the Vandermonde determinant resulting from the system above, $W_1$ is the determinant of the system with the first column replaced by a column of free terms of the system (as usual, in Cramer's rule).

It remains to find the value of $\chi'(\lambda_1)$. To do this, consider the polynomial $$P(\lambda) = (\lambda - 1)\chi(\lambda) = \lambda^{n+1} -2\lambda^n + 1.$$ It is clear that $$P '(\lambda_1) = (\lambda_1 - 1)\chi'(\lambda_1).$$ On the other hand, $$P'(\lambda_1) = (n+1)\lambda_1^n - 2n\lambda_1^{ n-1}.$$ Hence, taking into account the fact that $$\lambda_1^{n-1} = \frac{1}{\lambda_1(2 - \lambda_1)} = \frac{1}{\lambda_1\cdot \lambda_1^{-n}} = \frac{1}{\lambda_1^{-n+1}}, \text{ (since $2-\lambda_1 = \lambda_1^{-n})$} $$ it follows that $$C_1 = \frac{1}{\chi'(\lambda_1)} = \frac{\lambda_1-1}{(n+1)\lambda_1-2n}\lambda_1^{-n+1 }.$$
\end{proof}

From Rouché's theorem and Descartes' rule of signs, it easily follows that the polynomial $$\chi(\lambda) = \frac{\lambda^{n+1}-2\lambda^n+1}{\lambda-1}$$ from Proposition \ref{Binet} ($\lambda\neq 1$) has exactly one positive root greater than $1$ and $n$ complex roots inside the unit circle centered at zero on the complex plane. It means that among the terms of sum (\ref{sum}), there is precisely one whose absolute value is greater than $1$. It turns out that the sum of all other terms is always less than $1/2$ (see \cite{mathe} for the connection of this fact with random walks on the number line). Hence, the following holds:

\begin{theorem}[Z. Du, G. P. Dresden'14, \cite{Du}]\label{formula}
For the $k$th term of the $n$-bonacci sequence we have $$F_k^{(n)} = \mathrm{rnd}\, \left ( \frac{r-1}{(n+1)r-2n }r^{k-n+1} \right ),$$ where $r$ is the positive root of the polynomial $\chi(\lambda) = \lambda^n - \lambda^{n-1} - .. . - 1$, and $\mathrm{rnd}\, (\cdot)$ is the nearest integer.
\end{theorem}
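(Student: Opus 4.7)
The plan is to deduce the theorem from the exact formula of Proposition \ref{Binet} by showing that in the sum
$$F_k = \sum_{i=1}^n \frac{\lambda_i-1}{(n+1)\lambda_i-2n}\,\lambda_i^{k-n+1},$$
exactly one term is ``macroscopic'' while the rest are uniformly tiny. I would label the unique positive real root $r$ as $\lambda_1$, call
$$M_k := \frac{r-1}{(n+1)r-2n}\,r^{k-n+1}, \qquad E_k := F_k - M_k = \sum_{i=2}^n \frac{\lambda_i-1}{(n+1)\lambda_i-2n}\,\lambda_i^{k-n+1},$$
and aim to show $|E_k| < \tfrac{1}{2}$ for every $k \ge 0$. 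The conclusion $F_k = \mathrm{rnd}(M_k)$ is then immediate from the definition of the nearest integer.

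First I would pin down the location of the roots. Applying Descartes' rule of signs to $\chi(\lambda) = \lambda^n - \lambda^{n-1} - \cdots - 1$ gives exactly one positive real root; a direct sign check at $\lambda=1$ and $\lambda=2$ shows $r\in(1,2)$. To confirm all other roots of $\chi$ lie strictly inside the unit disc, I would pass to the factored polynomial $P(\lambda) = (\lambda-1)\chi(\lambda) = \lambda^{n+1} - 2\lambda^n + 1$ and apply Rouché's theorem on the circle $|\lambda|=1$: on this circle one checks $|{-2\lambda^n}| = 2 > |\lambda^{n+1} + 1|$ outside a small neighborhood of $\lambda = 1$, so $P$ has exactly $n$ zeros inside (the $n-1$ other roots of $\chi$, together with the spurious root $\lambda = 1$); the remaining zero is $r > 1$. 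Hence $|\lambda_i| < 1$ for $i = 2,\dots,n$.

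Next I would estimate the error $E_k$. For the coefficients, since $|\lambda_i|<1$ one has $|(n+1)\lambda_i - 2n| \ge 2n - (n+1)|\lambda_i| > n-1$, while $|\lambda_i - 1| < 2$, so each coefficient is bounded by $\tfrac{2}{n-1}$. Combined with $|\lambda_i|^{k-n+1} \le \rho^{k-n+1}$ where $\rho := \max_{i\ge 2}|\lambda_i| < 1$, one gets $|E_k| \le 2\rho^{k-n+1}$, which is well below $\tfrac12$ once $k$ is large enough. To upgrade this to a bound valid for \emph{all} $k\ge 0$, I would use the stronger identity $F_k \in \mathbb{Z}$ together with the initial values $F_0 = \cdots = F_{n-2} = 0$ and $F_{n-1}=1$: for these finitely many indices one directly verifies $|F_k - M_k| < \tfrac12$ by checking that $M_k$ is close to the corresponding integer value. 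A cleaner way to capture this uniformly, which I would try to write up, is the random-walk interpretation alluded to in the paper: the tail $E_k$ can be reinterpreted as a centered oscillating quantity whose amplitude is controlled by the largest subdominant modulus $\rho$, which for $\chi$ turns out to satisfy $\rho < r^{-1/(n-1)}$ or a similar sharp inequality.

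The genuine obstacle is this last step: a crude bound like $|E_k|\le 2(n-1)\rho^{k-n+1}$ does not give $|E_k|<\tfrac12$ for small $k$, and it grows with $n$. To make the argument fully rigorous and uniform in $(n,k)$ I would either (i) sharpen the coefficient bound by exploiting cancellation among the complex-conjugate pairs $\lambda_i,\overline{\lambda_i}$, grouping the sum into real oscillatory terms, or (ii) combine the asymptotic bound with an explicit finite check covering the initial band $0\le k < n-1$ plus a transition zone where $\rho^{k-n+1}$ is of order $1$. Either route reduces the theorem to the already-established exact formula of Proposition \ref{Binet}, at which point rounding $M_k$ recovers $F_k$ as claimed.
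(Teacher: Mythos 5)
Your overall plan---split the exact expression from Proposition \ref{Binet} into the dominant term $M_k$ coming from the unique root $r>1$ and an error $E_k$ from the remaining roots, then show $|E_k|<\tfrac12$---is exactly the route sketched in the paragraph preceding the theorem. Note, however, that the paper does not actually prove Theorem \ref{formula}: after that heuristic paragraph it quotes the result from \cite{Du}, and the assertion that ``the sum of all other terms is always less than $1/2$'' is taken on faith there. That inequality is the entire content of the theorem beyond Proposition \ref{Binet}, and your proposal, by your own admission, does not establish it; the two repair strategies you list are not carried out, and neither can succeed in the form described.

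Concretely: the bound $|E_k|<\tfrac12$ is asymptotically sharp, because at $k=n-1$ one has $F_{n-1}=1$ while $M_{n-1}=\frac{r-1}{(n+1)r-2n}\to\tfrac12$ as $n\to\infty$ (using $r-2=-r^{-n}$ and $r\to2$), so $|E_{n-1}|\to\tfrac12$ from below and there is no room for any estimate with a loose constant. Your triangle-inequality bound $|E_k|\le 2\rho^{\,k-n+1}$ is $\ge 2$ for every $k\le n-1$ (the exponent is nonpositive) and exceeds $\tfrac12$ by a factor of four even at $k=n-1$; improving the per-root coefficient bound from $\tfrac{2}{n-1}$ to below $\tfrac{1}{2(n-1)}$ is precisely the delicate point, handled in \cite{Du} by analyzing the image of the closed unit disk under the M\"obius map $z\mapsto\frac{z-1}{(n+1)z-2n}$. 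Your route (ii) is not a finite check uniformly in $n$: the subdominant modulus $\rho=\rho(n)$ tends to $1$, so the ``transition zone'' where $\rho^{\,k-n+1}$ is of order $1$ grows without bound, and the inequality $\rho<r^{-1/(n-1)}$ is offered speculatively and never proved. A secondary issue: your Rouch\'e comparison $|{-2\lambda^n}|=2>|\lambda^{n+1}+1|$ on $|\lambda|=1$ fails at every $(n+1)$-th root of unity, and $\lambda=1$ is itself a zero of $P$ lying \emph{on} the contour (not inside it), so even the root-localization step needs the more careful argument of Lemma~1 of \cite{Du}. As written, the proposal reduces the theorem to its hardest step and stops there.
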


Apply this result to the case of interest to us.

\begin{prop}
For the group $\mathbb{Z}/m\ast\mathbb{Z}/m = \langle a, b\ | \ a^m = b^m = 1 \rangle$ with the cyclic automorphism $\varphi: a\mapsto b$ of order $2$, the growth function $$\xi_{k}\sim\frac{r^{k+ 1}}{mr-2(m-1)},$$ for $m\geqslant 3$, $k\to\infty$, and $r$ is the positive root of the polynomial $\chi(\lambda) = \lambda^n - \lambda^ {n-1} - ... - 1$. In particular, the growth of the corresponding $2$-valued coset group is polynomial if and only if $m = 2$.
\end{prop}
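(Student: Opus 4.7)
The plan is to follow the template of Proposition \ref{z3z3}, replacing the exponent set $\{1,2\}$ by $\{1,2,\ldots,m-1\}$. The elements of $\G_\phi(G)$ are $\phi$-orbits of reduced words in the free product $\Z/m\ast\Z/m$, and each non-identity orbit has a unique normal-form representative of the shape $a^{k_1}b^{k_2}a^{k_3}\cdots$ with $k_i\in\{1,\ldots,m-1\}$ (the involution $\phi$ identifies the two cosets whose representatives differ only in the leading letter). Computing $[a,b]\ast\{w,\phi(w)\}=[\pi(aw),\pi(bw)]$ shows that one step of the iteration amounts to prepending $a$ or $b$ to the representative. Assigning the length $k_1+k_2+\cdots$ to a normal form, the same induction as in the case $m=3$ shows that $\bigcup_{i=0}^k[a,b]^i$ consists exactly of the orbits of length at most $k$, and the new orbits appearing at step $k$ are those of length exactly $k$. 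Hence $\xi_k=S_0+S_1+\cdots+S_k$, where $S_k$ counts normal forms of total length $k$.

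Conditioning on the first exponent $k_1\in\{1,\ldots,m-1\}$ gives the recurrence
$$S_k=\sum_{j=1}^{\min(m-1,k)}S_{k-j},\qquad S_0=1,$$
which for $k\geq m-1$ becomes the $(m-1)$-bonacci recurrence. A short induction, using the convention $S_{-j}=0$ for $j\geq 1$ to handle the small-$k$ range uniformly, identifies $S_k$ with a shift of the $n$-bonacci sequence for $n=m-1$, namely $S_k=F_{k+m-2}^{(m-1)}$. For $m=2$ the only allowed part is $1$, so $S_k\equiv1$ and $\xi_k=k+1$, which is linear.

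For $m\geq3$, Theorem \ref{formula} (equivalently Proposition \ref{Binet}) applied with $n=m-1$ and $j=k+m-2$ gives
$$S_k\sim\frac{r-1}{mr-2(m-1)}\,r^{k},$$
where $r>1$ is the unique positive root of $\chi(\lambda)=\lambda^{m-1}-\lambda^{m-2}-\cdots-1$. Summing a geometric progression,
$$\xi_k=\sum_{i=0}^{k}S_i\sim\frac{r-1}{mr-2(m-1)}\cdot\frac{r^{k+1}-1}{r-1}\sim\frac{r^{k+1}}{mr-2(m-1)},$$
which is the claimed asymptotic, and $r>1$ forces exponential growth. The only step that needs genuine care, and the main obstacle to a clean write-up, is the bookkeeping in the first paragraph: verifying that the step index stays synchronized with the word length for every $m$ (so that although collapses like $a^{m-1}\cdot a\leadsto b^{k_2}\cdots$ can drop the length mid-iteration, no length-$k$ orbit is missed at step $k$), and pinning down the correct shift between $S_k$ and $F_k^{(m-1)}$ so that the exponent of $r$ in the final asymptotic comes out exactly as stated.
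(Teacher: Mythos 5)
Your proposal is correct and follows essentially the same route as the paper: the same normal form $a^{k_1}b^{k_2}\cdots$ with $k_i\in\{1,\dots,m-1\}$, the same $(m-1)$-bonacci recurrence for $S_k$ with the shift $S_k=F_{k+m-2}^{(m-1)}$, the same appeal to Theorem \ref{formula}, and the same geometric summation yielding $\xi_k\sim r^{k+1}/(mr-2(m-1))$. The synchronization issue you flag at the end is real but is also left implicit in the paper (which simply asserts that all normal forms of length $k$ appear at step $k$), so your write-up is, if anything, slightly more careful than the original.
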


\begin{proof}
The normal form in this case looks like $$a^{k_1}b^{k_2}a^{k_3}...,$$ where $k_i\in \{1, 2, ..., m-1\}$. Suppose that $m\geqslant 3$. Let $S_k$ denote the number of distinct words of length $k$ in the multiset $[a, b]^k$. Assume that $$S_{-(m-2)}=...=S_{-1} = 0 \text{ and } S_0 = 1.$$ We have the recurrence relation $$S_k = S_{k - 1 } + S_{k - 2} + ... + S_{k - m + 1}.$$ This recurrence relation, together with the initial conditions, determines the $(m-1)$-bonacci sequence shifted by $m-2$ $$S_k^{(m-1)} = F_{k+m-2}^{(m-1)},$$ where $k\geqslant -(m-2).$

Thus, by Theorem \ref{formula} $$S_k = \mathrm{rnd}\, \left ( \frac{r-1}{mr-2(m-1)}r^{k} \right ). $$

Since $r > 1$ (see the discussion after the proof of Proposition \ref{Binet}), we get: $$\xi_k = S_1 + S_2 + ... + S_k \sim \sum\limits_{i = 1}^k \frac{r-1}{mr-2( m-1)}r^{i} \sim \frac{r^{k+1}}{mr-2(m-1)}$$ for $k\to\infty$.

\end{proof}

\section{Connection with Symbolic Dynamics}\label{symbolic}

In this section, we more closely study the 2-valued group $\mathbb{G}_\varphi\left(\mathbb{Z}/3\ast\mathbb{ Z}/3\right)$ considered above in the context of combinatorics on words (see, e.g., \cite{Lotaire, Semenov}).

For short, let us give the following 

\begin{defi}
We will call {\it cubeless} any word in the normal form of the group $\mathbb{Z}/3\ast\mathbb{Z}/3 = \langle a, b\ | \ a^3=b^3=1 \rangle$.
\end{defi}

From the normal form of the group $\mathbb{Z}/3\ast\mathbb{Z}/3$, it is clear that at step $k$ all possible cubeless words in the alphabet $\{a, b\}$ appear.

Let us construct an oriented tree $\Gamma$ (see Figure \ref{tree2}), the vertices of which will contain elements of the 2-valued group $\mathbb{G}$ as follows. At step $0$, we start with the vertex corresponding to the empty word $\Lambda$ --- this is the root of our tree. At step $1$, we form a vertex $[a, b]$ adjacent to the root. At step $2$, we draw two edges from the last vertex, each corresponding to assigning the letter $a$ or $b$ to the right. You will get two vertices with words of length $2$: $[a^2, b^2]$ and $[ab, ba]$. At step $k$, we start with all cubeless words of length $k-1$ and draw $1$ or $2$ edges from each vertex, guided by the following principle: if a word ends with the first degree of a letter, then exactly $2$ edges will depart from this vertex, corresponding multiplication by $a$ or $b$; if a word ends with the square of a letter, then precisely one edge corresponding to this additional letter will depart from this vertex.

\begin{figure}[!h]
\begin{center}
\includegraphics[scale=0.6]{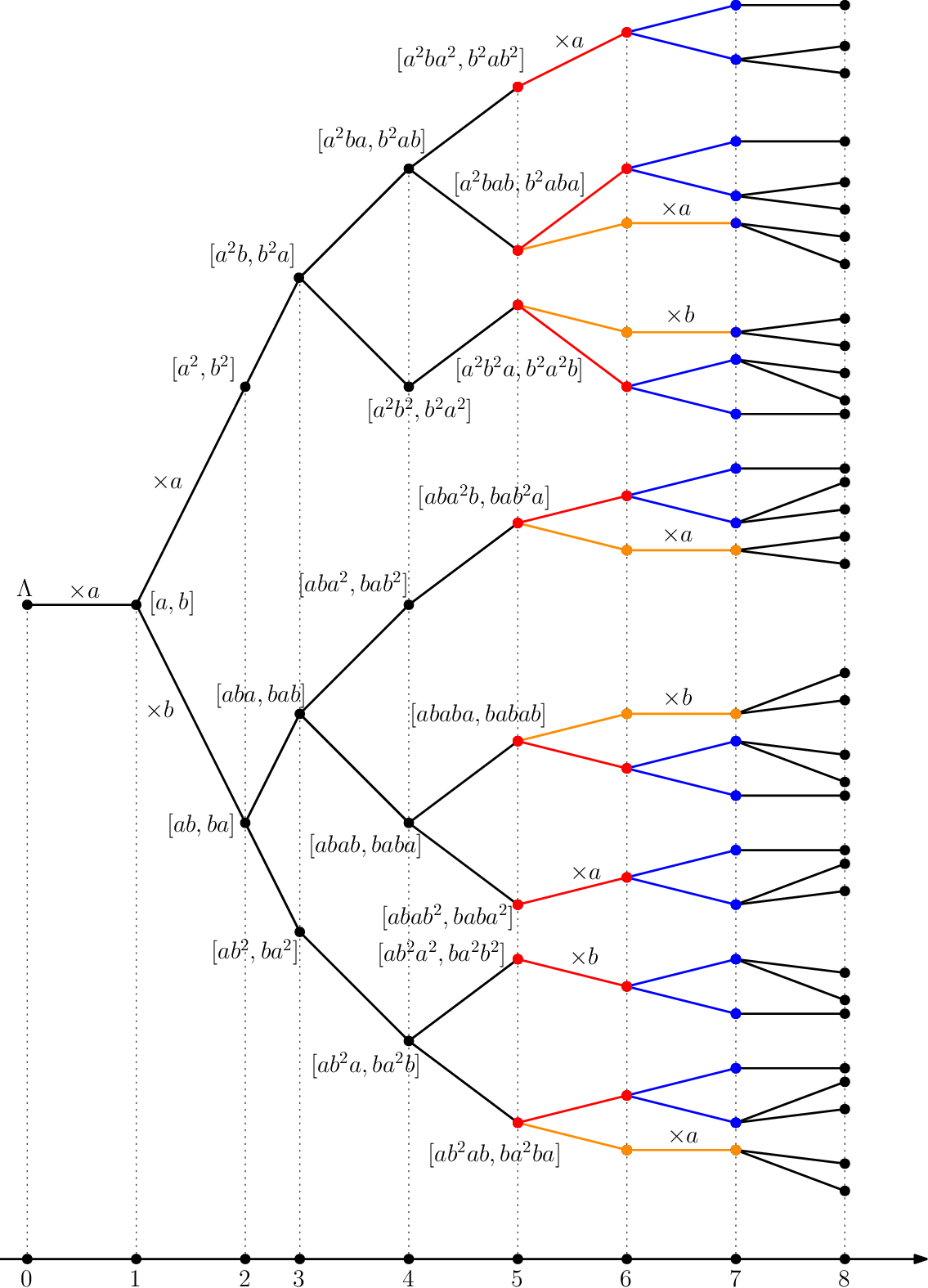}
\caption{}
\label{tree2}
\end{center}
\end{figure}

In this tree, at level $k$, there are $F_{k+1}$ vertices (as we saw in Proposition \ref{z3z3}).

Interestingly, one can draw two paths on this graph, corresponding to the two well-known infinite words in Symbolic Dynamics. To define them, introduce the notion of {\it morphism}. 

\begin{defi}
 Let $A$ and $B$ be alphabets. A {\it morphism} is a map $\F$ between the sets of words $A^\ast$ and $B^\ast$ in the corresponding alphabets satisfying $$\F(xy) = \F(x)\F(y)$$ for all words $x, y\in A^\ast$, i.e., $\F$ is a homomorphism of monoids.
\end{defi}  

In some cases, the sequence of words $\{\F^{n}(a)\}$ has a limit $\F^{\infty}(a)$ (see \cite{Lotaire} for details).

\begin{example}[\cite{Lotaire}]
Consider a morphism $$\F: \{a, b\}^\ast\to\{a, b\}^\ast,\ a\mapsto ab, \ b\mapsto a.$$ The {\it infinite Fibonacci word} is defined to be $\Phi := \F^{\infty}(a)$
$$\Phi = abaababaabaababaababaabaababaaba...$$
\end{example}

\begin{example}[\cite{Lotaire}]
A morphsim $$\F: \{a, b\}^\ast\to \{a, b\}^\ast, \ a\mapsto ab,\ b\mapsto ba$$ defines the \emph{Thue-Morse sequence} $\EuScript{T}:=\F^\infty(a)$

$$\EuScript{T} = abbabaabbaababbabaababbaabbabaab...$$
\end{example}

Since the paths in the tree $\Gamma$ correspond to all possible cubeless words, the Fibonacci word $\Phi$ and the Thue-Morse sequence $\EuScript{T} $ define some two paths. In Figure \ref{tree3}, the path corresponding to the Fibonacci word is shown in purple, and the path corresponding to the Thue-Morse sequence is shown in red (with the overlap along the first two edges starting from the root $\Lambda$).

\begin{figure}[h!]
\begin{center}
\includegraphics[scale=0.6]{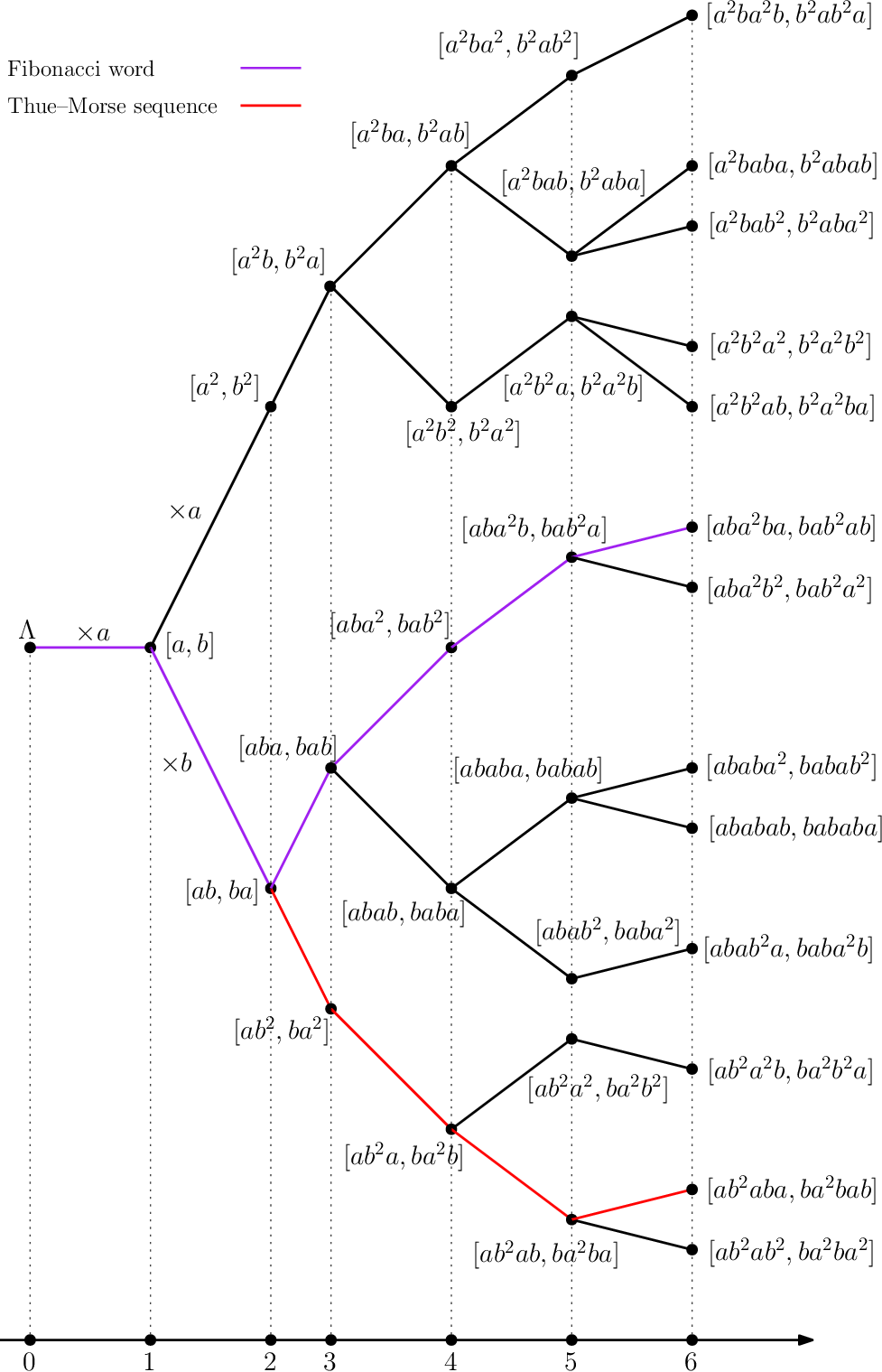}
\caption{}
\label{tree3}
\end{center}
\end{figure}

Note the following property of the constructed tree $\Gamma$.

\begin{lemma}
At the $k$-th level of the tree $\Gamma$, all cubeless words of length $k$ starting with the letter $a$ are located from top to bottom in the standard lexicographic order.
\end{lemma}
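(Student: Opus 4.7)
The plan is to proceed by induction on the level $k$. At level $k=1$ there is only one cubeless $a$-starting word, namely $a$ itself, so the assertion is vacuous. For the inductive step, I assume that at level $k-1$ the $a$-starting cubeless words of length $k-1$ appear top-to-bottom in lexicographic order, and I want to deduce the same for level $k$.

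The enumeration at level $k$ is obtained by processing each level-$(k-1)$ vertex in top-to-bottom order and listing its children in top-to-bottom order, so I need two separate facts. First, I fix the drawing convention that whenever a vertex has two outgoing edges (i.e.\ its word ends in a single letter), the edge corresponding to appending $a$ is drawn above the edge corresponding to appending $b$; this matches the figures. With this convention, the children of a single vertex representing an $a$-starting cubeless word $w$ are either the pair $wa, wb$ in lex order, or the unique forced extension (when $w$ ends in $a^2$ or $b^2$), so locally the children are always ordered lexicographically.

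Second, I need the between-blocks comparison: if $w_1, w_2$ are two $a$-starting cubeless words of length $k-1$ with $w_1 <_{\mathrm{lex}} w_2$, then every extension of $w_1$ by one letter is lex-smaller than every extension of $w_2$. This is immediate because $w_1$ and $w_2$ have equal length and hence first disagree at some position $j \leqslant k-1$ with $w_1$ carrying $a$ and $w_2$ carrying $b$; appending a letter at position $k$ leaves position $j$ untouched, so the disagreement, and therefore the lex comparison, is preserved.

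Combining these two observations, the concatenation of children-blocks produced by the top-to-bottom traversal of level $k-1$ is exactly the lexicographic enumeration of $a$-starting cubeless words of length $k$, closing the induction. The only point requiring care is the bookkeeping of the drawing convention, which amounts to choosing once and for all that the $a$-edge is drawn above the $b$-edge; once this is fixed, no obstacle remains.
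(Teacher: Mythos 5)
Your proof is correct and follows essentially the same route as the paper's: induction on $k$, using the observation that lexicographic comparison is decided by the leftmost disagreement while the tree appends letters on the right, so the order of parents propagates to their children. You are in fact slightly more careful than the paper, which only argues the between-blocks comparison and leaves the within-block ordering (and the convention that the $a$-edge is drawn above the $b$-edge) implicit.
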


\begin{proof}
Induction on $k$. When $k = 0$, we have an empty word, and the statement is obvious. Consider the set $L_k$ of all vertices of level $k$. Let $v_1, v_2\in L_k$, and $v_1 < v_2$ in the lexicographic order. Then the inequality $$\mathrm{Child}(v_1) < \mathrm{Child}(v_2)$$ will be satisfied between the children of the vertices $v_1$ and $v_2$, respectively, i.e. $u_1 < u_2$ for any $u_1 \in\mathrm{Child}(v_1)$ and $u_2\in\mathrm{Child}(v_2)$. It is so because additions occur on the right, and words are compared on the left in the lexicographic order.
\end{proof}

\begin{lemma}\label{lemma4}
For an arbitrary vertex $v$ of the tree $\Gamma$, consider the subtree $\Gamma_v$ starting from this vertex. Then the numbers of vertices at each level of the subtree $\Gamma_v$ form the Fibonacci sequence starting either from the Fibonacci number $F_1$ or $F_2$ depending on whether the word $w(v)$ $($with the first letter $a)$ at the vertex $v$ ends with the first or second degree of a letter.
\end{lemma}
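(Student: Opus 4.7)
The plan is to partition the vertices of $\Gamma$ according to the terminal power of the associated word: call a word \emph{Type I} if it ends with a single letter, and \emph{Type II} if it ends with a squared letter. The first observation I would make is that the branching rule from the construction of $\Gamma$ depends only on this type. A Type I vertex has exactly two children (appending the repeated letter yields a Type II word; appending the opposite letter yields a Type I word), while a Type II vertex has a unique child, necessarily Type I, because the only admissible extension is the opposite letter. Consequently the isomorphism type of the subtree $\Gamma_v$ depends only on the type of $v$.

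Next I would let $f_k$ and $s_k$ denote the number of vertices at level $k$ of the subtree rooted at a Type I and a Type II vertex respectively, with $f_0 = s_0 = 1$. The branching rule translates directly into the coupled system
\[
f_k = f_{k-1} + s_{k-1}, \qquad s_k = f_{k-1}, \qquad k \geqslant 1.
\]
Eliminating $s$ gives $f_k = f_{k-1} + f_{k-2}$, and the same recurrence then holds for $s_k$. With the initial values $(f_0, f_1) = (1, 2)$ and $(s_0, s_1) = (1, 1)$, an immediate induction identifies these sequences with consecutive Fibonacci numbers, with a shift in indexing determined by whether $v$ is Type I or Type II — which is exactly what the lemma asserts.

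I do not expect any genuine obstacle; the argument reduces to verifying the type-dependent branching and unwinding a two-variable linear recursion. The one point that deserves explicit comment is the uniformity claim: every Type I vertex branches in the same way as every other Type I vertex, regardless of the full history of its word, because admissibility of a letter-extension is governed solely by the last letter (and whether it is already doubled). Once this local-to-global reduction is in place, the Fibonacci structure is forced by the two-term recursion.
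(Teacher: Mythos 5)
Your proof is correct and rests on the same key observation as the paper's: the branching rule, and hence the isomorphism type of the subtree $\Gamma_v$, depends only on whether $w(v)$ ends in a first power or a square of a letter. The paper finishes by identifying the two possible subtrees with $\Gamma_{[a,b]}$ and $\Gamma_\Lambda=\Gamma$ and citing the count from Proposition \ref{z3z3}, whereas you re-derive the Fibonacci recurrence directly from the coupled system $f_k=f_{k-1}+s_{k-1}$, $s_k=f_{k-1}$ --- a slightly more self-contained but essentially equivalent finish.
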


\begin{proof}
Let us look at the last letter of the word $w(v)$ and forget about the remaining letters. It is either the first power of a letter, say $a$, or the second power, say $a^2$ (without loss of generality). In the first case, a subtree $\Gamma_{[a, b]}$ will grow from $v$, and in the second case, the original tree $\Gamma_\Lambda = \Gamma$. But for the original tree $\Gamma$, the numbers of vertices at each level form the Fibonacci sequence by constructing $\Gamma$, as noted in Proposition \ref{z3z3}.
\end{proof}

From this lemma, we get

\begin{prop}\label{observation}
Consider the sequence $$\{\Theta_k\} = \{\Psi, \Psi a, \Psi aa, \Psi aab, ...\}$$ of subwords of the infinite cubeless word $$\Psi aabaabaab... =: \Psi(aab ),$$ where $\Psi$ is any cubeless word whose the last letter is different from the letter $a$. Then the number $Q_k$ of cubeless words which are not smaller than the word $\Theta_k$ in the standard lexicographic order satisfies the recurrence relation $$Q_k = Q_{k-1} + Q_{k-2}.$$
\end{prop}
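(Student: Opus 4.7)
The natural approach is to interpret $Q_k$ geometrically inside the tree $\Gamma$. By the preceding lemma on lexicographic order, $Q_k$ equals the number of vertices at level $n_k := |\Psi|+k$ of $\Gamma$ lying at or below $\Theta_k$. Walking down the path $\Lambda \to \cdots \to \Psi \to \Theta_1 \to \cdots \to \Theta_k$, at each step where we follow the smaller of two children the larger sibling heads a subtree whose entire level-$n_k$ frontier consists of words strictly lex-greater than $\Theta_k$. Collecting these right siblings into a set $\mathcal{S}_k$ and writing $D_v(n)$ for the number of descendants of $v$ at level $n$, one obtains
\[
Q_k \;=\; 1 \;+\; \sum_{v \in \mathcal{S}_k} D_v(n_k).
\]

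By Lemma \ref{lemma4}, each sequence $\{D_v(n)\}$ is a shifted Fibonacci sequence, so $D_v(n) = D_v(n-1) + D_v(n-2)$ whenever $n$ is at least two levels past the root of $v$. Every "old" contribution --- one coming from a sibling whose root sits at level at most $n_{k-2}$ --- thus propagates the Fibonacci recurrence. Subtracting the analogous decompositions of $Q_{k-1}$ and $Q_{k-2}$ from that of $Q_k$ and cancelling these old contributions reduces the desired identity $Q_k = Q_{k-1}+Q_{k-2}$ to the boundary statement
\[
D_{A_{k-1}}(n_k) + D_{A_k}(n_k) - D_{A_{k-1}}(n_{k-1}) \;=\; 1,
\]
with the convention that $D_{A_j}(\cdot)=0$ when the right sibling $A_j$ of $\Theta_j$ does not exist.

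Since the extension letters $\sigma_1\sigma_2\sigma_3\ldots = aab\,aab\,aab\ldots$ have period $3$, whether $A_j$ exists and the power in which it ends are both periodic in $j \bmod 3$: tracing the path one checks that $A_j$ exists iff $j\not\equiv 0\pmod 3$, ending in $b^2$ when $j\equiv 1$ and in $b^1$ when $j\equiv 2$. Reading off $D_{A_j}(n_j)=1$ and the values $D_{A_j}(n_{j+1}) \in \{F_2,\,F_3\} = \{1,\,2\}$ from Lemma \ref{lemma4}, the three residue classes of $k\bmod 3$ reduce the boundary identity to $0+1-0$, $1+1-1$, or $2+0-1$, each equal to $1$, and the proof is complete.

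The main obstacle is this final boundary step: the Fibonacci recurrence of Lemma \ref{lemma4} only kicks in two levels past the root of a subtree, so it cannot be invoked directly at $A_{k-1}$ or $A_k$. Their contributions have to be computed from initial values, and the verification that they exactly balance the constant $+1$'s in the decompositions of $Q_k$, $Q_{k-1}$, $Q_{k-2}$ is what requires unwinding the periodic structure of the $\sigma_j$; everything else is a straightforward bookkeeping of Fibonacci identities.
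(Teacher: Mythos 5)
Your proposal follows essentially the same route as the paper: decompose the set of level-$n_k$ words lying at or below the path $\gamma$ into the subtrees hanging off it and sum the Fibonacci recurrences supplied by Lemma \ref{lemma4}. The one substantive difference is that you explicitly isolate and verify the boundary identity contributed by the one or two newest subtrees (where the recurrence of Lemma \ref{lemma4} has not yet taken effect) together with the $+1$ for the path vertex itself --- a case check over $k \bmod 3$ that the paper's own proof passes over in silence but that is genuinely needed for the cancellation to close, and which you carry out correctly.
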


\begin{proof}
The word $\Psi aabaabaab... = \Psi(aab)$ corresponds to an infinite path $\gamma$ in the tree $\Gamma$. Each $\Theta_k$ bijectively corresponds to some vertex on this path. The set of words $\bigcup\limits_{k} Q_k$ we are interested in corresponds to all vertices lying under $\gamma$. But the last set is the union of all the vertices of the subtrees. Now, the statement follows from the summation of all recurrence relations for each subtree according to Lemma \ref{lemma4}.
\end{proof}

\end{document}